\theoremstyle{plain} 
\newtheorem{thm}{Theorem}
\theoremstyle{definition}
\theoremstyle{remark}
\numberwithin{equation}{section}
\DeclareMathSymbol{\R}{\mathalpha}{AMSb}{"52}
\DeclareMathSymbol{\C}{\mathalpha}{AMSb}{"43}
\newcommand{\beq}{\begin{equation}}
\newcommand{\eeq}{\end{equation}}
\newcommand{\set}[1]{\left\{#1\right\}}
\newcommand{\pd}{\,\partial}
\newcommand{\bd}{\begin{description}}
\newcommand{\ed}{\end{description}}
\newcommand{\beqr}{\begin{eqnarray}}
\newcommand{\eeqr}{\end{eqnarray}}
\newcommand{\beqt}{\begin{equation}}
\newcommand{\eeqt}{\end{equation}}
\begin{document}

\title[]{Invariants associated with linear ordinary differential equations}
\author[]{J C Ndogmo}

\address{PO Box 2446
Bellville 7535\\
South Africa\\
}
\email{ndogmoj@yahoo.com}

\begin{abstract}
We apply a novel method for the equivalence group and its
infinitesimal generators to the investigation of invariants of
linear ordinary differential equations. First, a comparative study
of this method is illustrated by an example. Next, the method is
used to obtain the invariants of low order linear ordinary
differential equations, and the structure invariance group for an
arbitrary order of these equations. Other properties of these
equations are also discussed, including the exact number of their
invariants.

\end{abstract}

\keywords{Determination methods, Symmetry generator, Structure
invariance group, fundamental invariants, linear equations}
\subjclass[2000]{58J70, 34C20, 35A30}
%
\maketitle

\section{Introduction}
\label{s:intro}
   Due to promising results obtained on invariants of algebraic
functions, and to the similarities in properties between
differential equations and algebraic equations, the study of
invariants of differential equations began in the middle of the
nineteen century. One of the most important studies of these
functions was carried out by Forsyth in his very valuable memoir
~\cite{for-inv}, in which he considers the linear ordinary
differential equation of general order $n$ in various canonical
forms, the first of which is given by
\begin{equation}\label{eq:stdlin}
y^{(n)} + a_{n-1} y^{(n-1)} + a_{n-2} y^{(n-2)}+\dots + a_0 y=0,
\end{equation}
where the coefficients $a_{n-1}, a_{n-2}, \dots, a_0$ are arbitrary
functions of the independent variable $x.$ Earlier writers on the
subject, cited here in an almost chronological order, include
Laplace ~\cite{laplace}, Laguerre ~\cite{lag}, Brioschi
 ~\cite{brioch}, and more importantly Halphen, who made
ground-breaking contributions in his celebrated memoir
 ~\cite{halphen}.\par 
Methods used up to the middle of the nineteen century for studying
invariant functions  were very intuitive, and based on a direct
analysis, in which most results were obtained by a comparison of
coefficients of the equation before and after it was subjected to
allowed transformations. However, the application of these
techniques has been restricted  to linear equations.\par

Based on ideas outlined by Lie ~\cite{lie1}, the development of
infinitesimal methods for the investigation of invariants of
differential equations started probably in ~\cite{ovsy1}, and a
formal  method has been suggested  ~\cite{ibra-not}, which is being
 commonly used  ~\cite{ndog08a, ibra-nl,  ibra-par, waf,
faz}. However, the latter method requires the knowledge of the
equivalence transformations of the equation, and thus a new method
that provides these transformations and at the same time the
infinitesimal generators for the invariant functions has recently
been suggested ~\cite{ndogftc}.\par

In this paper, we use the method of ~\cite{ndogftc} to derive
explicit expressions for the invariants of various canonical forms
of the general linear ordinary differential equations of order up to
5. We also use the same method to derive the structure invariance
group for a certain canonical form of the equation, and for a
general order. Relationships between the invariants found as well as
some of their other properties, including their exact number, are
also investigated. We start our discussion in Section
\ref{s:2methods} by an illustrative example comparing the former
infinitesimal method of
 ~\cite{ibra-not, ibra-nl} with that of ~\cite{ndogftc}.

\section{Two methods of determination}
\label{s:2methods} %
   We begin this section with some generalities about  equivalence transformations.
 Suppose that $\mathcal{F}$ represents a
family of differential equations of the form
\begin{equation}\label{eq:delta}
    \Delta(x, y_{(n)}; C)=0,
\end{equation}
where $x=(x^1, \dots, x^n)$ is the set of independent variables, $y=
(y_1, \dots, y_q)$ is the set of dependent variables and $y_{(n)}$
denotes the set of all derivatives of $y$ up to the order $n,$ and
where $C$ denotes collectively the set of all parameters specifying
the family element in $\mathcal{F}.$  These parameters might be
either arbitrary functions of $x,\, y$ and the derivatives of $y$ up
to the order $n,$ or arbitrary constants. Denote by $G$ a connected
Lie group of point transformations of the form
\begin{equation}\label{eq:eqvgp}
x= \phi (\bar{x}, \bar{y}; \sigma), \qquad y = \psi (\bar{x},
\bar{y}; \sigma),
\end{equation}
where $\sigma$ denotes collectively the parameters specifying the
group element in $G.$  We shall say that $G$ is the {\em equivalence
group} of ~\eqref{eq:delta} if it is the largest group of
transformations that maps every element of $\mathcal{F}$ into
$\mathcal{F}.$  In this case ~\eqref{eq:eqvgp} is called the {\em
structure invariance group} of ~\eqref{eq:delta} and the transformed
equation takes  the same form
\begin{subequations}\label{eq:delta2}
\begin{align}
&\Delta (\bar{x}, \bar{y}_{(n)}; \bar{C})=0,\\[-5mm]
\intertext{where \vspace{-4mm}}
&\bar{C}_j = \bar{C}_j(C, C_{(s)}; \sigma), \label{eq:coef}
\end{align}
\end{subequations}
and where $C_{(s)}$ represents the set of all derivatives of $C$ up
to a certain order $s.$  In fact, the latter equality
~\eqref{eq:coef} defines another group of transformations $G_c$ on
the set of all parameters $C$ of the differential equation
~\cite{liegp}, and we shall be interested in this paper in the
invariants of $G_c.$ These are functions of the coefficients of the
original equation which have exactly the same expression when they
are also formed for the transformed equation. We also note that $G$
and $G_c$ represent the same set, and we shall use the notation
$G_c$ only when there is  need to specify the group action on
coefficients of the equation.\par
   The terminology used for invariants of differential equations and
their variants in the current literature ~\cite{ndog08a, ibra-par,
waf, faz, melesh} is slightly different from that of Forsyth
~\cite{for-inv} and earlier writers on the subject. What is now
commonly called semi-invariants are functions of the form $\Phi(C,
C_{(r)})$ that satisfies a relation of the form $\Phi(C, C_{(r)}) =
\mathbf{w}(\sigma)\cdot \Phi (\bar{C}, \bar{C}_{(r)}).$ When the
weight $\mathbf{w}(\sigma)$ is equal to one, $\Phi(C, C_{(r)}),$ is
called an invariant, or an absolute invariant. Semi-invariants
usually correspond to partial structure invariance groups obtained
by letting either the depend variable or the independent variables
unchanged.\par

Because the infinitesimal method proposed in ~\cite{ndogftc} is
still entirely new and has been, in particular, applied only to a
couple of examples, we wish to illustrate a comparison of this
method with the most well-known method of ~\cite{ibra-not}. For this
purpose we consider Eq. ~\eqref{eq:stdlin} with $n=3,$ which is the
lowest order for which a linear ODE may have nontrivial invariants.
The structure invariance group of ~\eqref{eq:stdlin} is given by the
change of variables $x= f(\bar{x}), \; y= T(\bar{x}) \bar{y},$ where
$f$ and $T$ are arbitrary functions, and for $n=3,$ this equation
takes the form
\begin{equation}\label{eq:d3gn}
y^{(3)} + a_2 y'' + a_1 y' + a_0 y=0,
\end{equation}
and it is easy to see that the corresponding infinitesimal
transformations associated with  the structure invariance group may
be written in the form
\begin{equation}\label{eq:infid3}
\bar{x} \approx x+ \varepsilon \xi(x), \qquad \bar{y} \approx y+
\varepsilon (\eta(x) y),
\end{equation}
where $\xi$ and $\eta$ are some arbitrary functions. If we now let
$$V_a^{(3)}= \xi \pd_x + y\, \eta \pd_y + \zeta_1 \pd_{y'} + \zeta_2 \pd_{y''}
           +\zeta_3 \pd_{y'''} $$
denote the third prolongation of $ V_a= \xi \pd_x + y \eta \pd_y,$
then we may write

\begin{equation}
\label{eq:yppp} \bar{y}\,'\approx y' + \varepsilon\, \zeta_1 (x, y,
y'), \quad   \bar{y}\,'' \approx y''+\varepsilon\, \zeta_2 (x, y,
y', y''), \quad \bar{y}\,^{(3)}\approx  y^{(3)} + \varepsilon
\,\zeta_3(x, y_{(3)}).
\end{equation}

According to the former method ~\cite{ibra-not}, in order to find
the corresponding infinitesimal transformations of the coefficients
$a_2, a_1,$ and $a_0$ of ~\eqref{eq:d3gn}, and hence to obtain the
infinitesimal generator for the associated induced group $G_c,$ the
original dependent variable $y$ and its derivatives are to be
expressed in terms of $\bar{y}$ and its derivatives according to
approximations of the form
\begin{equation}
\label{eq:yppp2}   y' \approx \bar{y}\,' - \varepsilon\, \zeta_1(x,
y, \bar{y}\,'), \quad y''\approx \bar{y}\,'' - \varepsilon\,\zeta_2
(x,y, \bar{y}\,', \bar{y}\,''), \quad y^{(3)}\approx \bar{y}\,^{(3)}
- \varepsilon \,\zeta_3(x,y, \bar{y}_{(3)}).
\end{equation}
These expressions are then used to substitute $\bar{y}$ and its
derivatives for $y$ and its derivatives in the original differential
equation. In certain cases such as that of simple infinitesimal
transformations of the form ~\eqref{eq:infid3}, the required
approximations ~\eqref{eq:yppp2} can be obtained from
 ~\eqref{eq:yppp}. More precisely, an explicit calculation of
$V_a^{(3)}$ shows that
\begin{subequations}\label{eq:zetad3}
\begin{align}
\zeta_1  &= y g' + (g-f')y' \\
\zeta_2  &= y'(2 g'-f'') + y g'' + (g- 2f')y''\\
\zeta_3  &= 3(g'-f'')y''+ y'(3 g'' - f''')+ y g'''+ (g-3f')y'''
\end{align}
\end{subequations}

The first explicit approximation is readily obtained from the second
equation of ~\eqref{eq:infid3} which shows that
\begin{equation}\label{eq:rly}
y= \bar{y} \frac{1}{1+ \varepsilon g}= \bar{y} (1- \varepsilon g),
\end{equation}
by neglecting terms of order two or higher in $\varepsilon.$ In a
similar way, using the equations ~\eqref{eq:yppp2} and
~\eqref{eq:zetad3} we obtain after simplification the following
approximations
\begin{subequations}\label{eq:rlyppp}
\begin{align}
y' &=  (\bar{y}\,' - \varepsilon y g')(1- \varepsilon (g-f'))\\
y'' &=   \bigl[ \bar{y}\,'' - \varepsilon \left( y' (2 g'-f'')+ y g'' \right)\left(1- \varepsilon (g- 2 f') \right) \bigr]\\
y''' &= \left[ \bar{y}\,^{(3)} - \varepsilon \left( 3 (g'-f'')y''+
y' (3 g'' - f''') + y g^{(3)} \right) \right] \left( 1- \varepsilon
(g- 3 f') \right)
\end{align}
\end{subequations}

Substituting equations ~\eqref{eq:rly} and ~\eqref{eq:rlyppp} into
~\eqref{eq:d3gn} and rearranging shows that the corresponding
infinitesimal transformations for the coefficients of the equation
are given by
\begin{subequations}\label{eq:ifcofd3}
\begin{align}
\bar{a}_0 & = a_0 + \varepsilon \bigl[ - 3 a_0 f'- a_1 g' - a_2 g'' - g^{(3)} \bigr]  \\
\bar{a}_1 &= a_1 + \varepsilon \bigl[  - 2 a_1 f' - 2 a_2 g'+ a_2 f'' - 3 g'' + f^{(3)}\bigr]\\
\bar{a}_2 &= a_2 + \varepsilon \bigl[ - a_2 f' - 3g' + 3 f'' \bigr].
\end{align}
\end{subequations}
In other words, the infinitesimal generator of the group $G_c$
corresponding to ~\eqref{eq:d3gn} is given by
\begin{equation}\label{eq:generic3}
 \begin{split}  X^0=&f \pd_x \\
                   &+ (  - a_2 f' - 3g' + 3 f'' )  \pd_{a_2} \\
                   &+  (- 2 a_1 f' - 2 a_2 g'+ a_2 f'' - 3 g'' + f^{(3)}t) \pd_{a_1}\\
                   &+ (- 3 a_0 f'- a_1 g' - a_2 g'' - g^{(3)} ) \pd_{a_0}
\end{split}
\end{equation}

The other method that has just been proposed in ~\cite{ndogftc} for
finding the infinitesimal generators $X^0$ of the group $G_c$ for a
given differential equation simultaneously determines the
equivalence group of the equation, in infinitesimal form. The first
step
 in the determination of $X^0$ is, according to that method, to look
for the infinitesimal generator $X$ of ~\eqref{eq:d3gn}, in which
the arbitrary coefficients are also considered as dependent
variables. For a general equation of the form ~\eqref{eq:delta}, $X$
takes the form
$$
X= \xi_1  \pd_{x^1} +\dots + \xi_p  \pd_{x^p} + \eta_1  \pd_{y_1} +
\dots + \eta_q  \pd_{y_q} + \phi_1  \pd_{C^1} + \phi_2  \pd_{C^2} +
\dots ,
$$
where $(C^1, C^2, \dots)=C$ denotes the set of  parameters
specifying the family element in $\mathcal{F}.$  This expression for
$X$ may be written in the shorter form $X= \set{\xi, \eta, \phi},$
where $\xi, \eta$ and $\phi$ represents collectively the functions
$\xi_j, \eta_j$ and $\phi_j$, respectively. The next step according
to the method is to look at minimum conditions that reduce $V=
\set{\xi, \eta}$ to a generator $V^0 = \set{\xi^0, \eta^0}$  of the
group $G$ of equivalence transformations. These conditions are also
imposed on $\phi$ to obtain the resulting function  $\phi^0,$ and
hence the generator $X^0= \set{\xi^0, \phi^0}$ of $G_c.$\par

In the actual case of equation ~\eqref{eq:d3gn} we readily find that
the generator
\begin{equation}\label{eq:symd3gn}
X= \xi  \pd_x + \eta  \pd_y + \phi_2  \pd_{a_2}  + \phi_1 \,
\pd_{a_1} + \phi_0  \pd_{a_0}
\end{equation}
is given by
\begin{align*}
\xi  &= f \\
\eta &= g y+ h\\
\phi_2 &= - a_2 f' - 3g' + 3 f''\\
\phi_1 &=  - 2 a_1 f'- 2a_2 g' + a_2 f'' - 3 g'' + f^{(3)}\\
\phi_0 &=- \frac{1}{y}\left[   a_0 h + 3 a_0 y f' + a_1 y g' + a_1
h'x + a2 y g'' + a_2 h '' + y g^{(3)} + h^{(3)} \right],
\end{align*}
where $f,g$ and $h$ are arbitrary functions of $x.$ It is clear
that, due to the homogeneity of ~\eqref{eq:d3gn}, we must have
$h=0.$ This simple condition immediately reduces $V= \set{f, y g+h}$
to the well-known generator $V^0= \set{f, y g}$ of $G.$ If we denote
by $\phi^0= \set{\phi_2^0, \phi_1^0, \phi_0^0}$ the resulting vector
when the same condition $h=0$ is applied to $\phi= \set{\phi_2,
\phi_1, \phi_0},$ then $X^0= \set{f, \phi^0},$ which may be
represented as
\begin{equation}\label{eq:ifd3gnb}
X^0= f \pd_x + \phi_2^0 \pd_{a_2} + \phi_1^0 \pd_{a_1} + \phi_0^0
\pd_{a_0},
\end{equation}

is the same as the operator $X^0$ of ~\eqref{eq:generic3}.\par

 It
should however be noted that the former method involves a great deal
of algebraic manipulations, and in the actual case of Eq.
~\eqref{eq:d3gn}, the method works because of the simplicity of the
infinitesimal generator $V_a= \set{f, y g}.$ For higher orders of
the Eq. ~\eqref{eq:stdlin}, difficulties with this method become
quite serious. In fact, variants of this former method are being
increasingly used, which also exploit the symmetry properties of the
equation ~\cite{ibra-par}, but even these variants still require the
knowledge of the equivalence group.

\section{Invariants for equations of lowest orders}
We shall use the indicated method of ~\cite{ndogftc} in this section
to derive explicit expressions for invariants of linear ODEs of
order not higher than the fifth. It should be noted that, not only
the infinitesimal method we are using for finding these invariants
is new, but also almost all of the invariants found are appearing in
explicit form for the first time. The focus of Forsyth and his
contemporaries was not only on linear equations of the form
~\eqref{eq:delta}, but also on functions $\Phi(C, C_{(r)})$
satisfying conditions of the form

\begin{equation}\label{eq:forminv}
\Phi(C, C_{(r)})= h(g)\cdot \Phi(\bar{C}, \bar{C}_{(r)}), \quad
\text{ or } \quad \Phi(C, C_{(r)})=  (d \bar{x}/ d x)^\sigma \cdot
\Phi(\bar{C}, \bar{C}_{(r)}),
\end{equation}
in the case of linear ODEs, where $h$ is an arbitrary function,
$\sigma$ is a scalar and where
 $g= g(\bar{x})$ and $\bar{x}$ are defined by a change of variables
of the form $x= f(\bar{x}),\; y= g\, \bar{y},$ where $f$ is
arbitrary. Such functions are clearly invariants of the linear
equation only if the dependent variable alone, or the independent
variable alone, is transformed, but not both as in our analysis.
However, in his very talented analysis starting with the
investigation of semi-invariants satisfying the second condition of
~\eqref{eq:forminv}, Forsyth obtained in his memoir ~\cite{for-inv}
a very implicit expression for true invariants of linear ODEs of a
general order, given as an indefinite sequence  in terms of
semi-invariants. As it is stated in ~\cite{for-inv}, earlier works
on the subject had given rise only to semi-invariants of order not
higher than the fourth, with the exception of two special functions
which satisfy the condition of invariance for equations of all
orders.\par

\subsection{Equations in normal reduced form}
Invariants of differential equations generally have a prominent size
involving several terms and factors, and thus they are usually
studied by first putting the equation in a form in which a number of
coefficients vanish. By a change of variable of the form
\begin{equation}\label{eq:chg2nor}
x=\bar{x},\qquad y = \exp(- \int a_1 \,d \bar{x}) \bar{y},
\end{equation}
equation ~\eqref{eq:stdlin} can be reduced to the simpler form
\begin{equation}\label{eq:nor}
y^{(n)} + A_{n-2} y^{(n-2)}+  A_{n-3} y^{(n-3)}+ \dots + A_0 y=0,
\end{equation}
after the renaming of variables, and where the $A_j= A_j(x)$ are the
new coefficients. Eq. ~\eqref{eq:nor}, which is referred to as the
normal form of ~\eqref{eq:stdlin},  will be as in ~\cite{for-inv}
our starting canonical form for the determination of invariants.

The generic infinitesimal generator $X^0$ of  $G_c$ such as the one
found in ~\eqref{eq:generic3} linearly depends on free parameters
$K_j,$ and can be written in terms of these parameters as a linear
combination of the form,
\begin{equation}
\label{eq:linearcomb} X^0= \sum_{j=1}^\nu K_j W_j,
\end{equation}
 where the $W_j$ are much simplified vector fields free of
 arbitrary parameters and with the same number of
 components as $X^0,$ and a function $F=F(C)$ satisfies the
 condition of invariance $X^0 \cdot F=0$ if and only if $W_j \cdot
 F=0,$ for $j=1, \dots, \nu.$ Consequently, the invariant functions
 are completely specified by the matrix $\mathcal{M}\equiv \set{W_1, \dots,
 W_\nu}$ whose $j$th row is represented by the components of $W_j,$
 plus the coordinate system in which the vector fields $W_j$ are
 expressed. These considerations also apply to prolongations of
 $X^0$ and corresponding differential invariants. For a given canonical
form of the equation, we will use the symbols $\Psi_{a}^{b,c}$ and
$X_a^m$ to represent explicit expressions of invariants and
corresponding infinitesimal generator $X^0,$ respectively. In such a
representation, the subscript $a$ will denote the order of the
equation, while the superscript $b$ will represent the order of
prolongation of the original generator $X^0,$ and $c$ will represent
the actual number of the invariant in some chosen order. The
superscript $m$ will represent the canonical form considered, and
will be $\mathsf{n}$  in the case of the normal form
~\eqref{eq:nor}, $\mathsf{s}$ for the standard form
~\eqref{eq:stdlin}, and $\mathsf{w}$ for another canonical form to
be introduced below. For consistency, coefficients in all canonical
forms considered will be represented as in ~\eqref{eq:stdlin} by the
symbols $a_j= a_j(x).$\par

For $n=3,$ the first nontrivial differential invariants of
~\eqref{eq:nor} occurs only as from the third prolongation of $X^0.$
This third prolongation of the generator $X^0$ has exactly one
invariant $\Psi_3^{3,1},$ and both $X^0=X_3^\mathsf{n}$ and
$\Psi_3^{3,1}$ have already appeared in the recent literature
~\cite{ndogftc}. However, we recall these results here in a slightly
simplified form, together with those we have now obtained for the
fourth prolongation of $X_3^\mathsf{n},$ as well as for the case
$n=4.$
\par

For $n=3,$ we have
\begin{subequations} \label{eq:casnord3}
\begin{align}
X_3^\mathsf{n} &= f \pd_x -2 (a_1 f' + f^{(3)}) \pd_{a_1} + \left(-
3 a_0 f' - a_1 f''
- f^{(4)}\right)\pd_{a_0}  \\
\Psi_3^{3,1} &=  \frac{ (9\, a_1
\mu^2+ 7 \mu'\,^2- 6 \mu \mu'')^3}{1728 \mu^8}\\
\Psi_3^{4,1} &= \Psi_3^{3,1} \\
\begin{split} \Psi_3^{4,2} &=  \frac{-1}{18 \mu^4} \left( 216 a_0^4 - 324 a_0^3\, a_1' + 18 \, a_0^2 (9 a_1'^2+ 2 a_1
\mu') + + 9 \mu^2 \mu^{(3)}\right)\\
 & \frac{-1}{18 \mu^4} \left(\mu' (28 \mu'\,^2 + 9 a_1'(a_1\, a_1' - 4 \mu'')) - 9 a_0(3 a_1'^3+ 4 a_1 \, a_1' \mu'- 8 \mu' \mu''), \right)\end{split}
\end{align}
\end{subequations}
where $f$ is an arbitrary function and where $\mu= -2a_0+ a_1'.$ For
$n=4,$ the first nontrivial invariant occurs only as from the second
prolongation of $X^0= X_4^\mathsf{n},$ and we have
\begin{subequations}\label{eq:norfn4}
\begin{align}
 \begin{split} X_4^\mathsf{n} &= f\pd_x +(- 2 a_2 f' - 5 f^{(3)}) \pd_{a_2} + (-3 a_1
f' - 2 a_2 f'' - 5 f^{(4)})\pd_{a_1}\\
 & \quad +\frac{1}{2} \left[  -8\, a_0
f' - 3 (a_1 f'' + a_2 f^{(3)} )\right]
\pd_{a_0}\end{split}  \\
\Psi_4^{2,1} &= \frac{1}{\mu} (-100 a_0 + 9 a_2^2 + 50 a_1' - 20 a_2'') \\
\begin{split} \Psi_4^{2,2} &= \mu_1 (-1053 a_2^4 - 4500
a_2(2 a_1^2 - a_1 a_2' - a_2'^{\,2}) + 180 a_2^2 (130 a_0 - 3 (5
a_1'+ 8\, a_2'')) \\
& - 100 (1300 a_0^2+75 \mu (-10 a_0'+ 3 a_1'') + 90 a_1' a_2'' + 27
a_2''^{\,2}-60 a_0 (5 a_1'+ 8\, a_2'') ), \end{split}
\end{align}
\end{subequations}

 where $\mu= a_1- a_2',$ and $\mu_1= 1/\left[75000
\mu^{(8/3)}\right].$

\subsection{Equations with vanishing coefficients $a_{n-1}$ and
$a_{n-2}$} A much simpler expression for the invariants is obtained
if the two terms involving the coefficients $a_{n-1}$ and $a_{n-2}$
in Eq. ~\eqref{eq:stdlin} are reduced to zero in the transformed
equation. Such a change of variables can be accomplished by a
transformation of the form
\begin{subequations}\label{eq:chg2sch}
\begin{align}
\set{\bar{x}, x}&= \frac{12}{n (n-1)(n+1)}  a_{n-2}, \qquad y =
\exp(- \int a_{n-1} d \bar{x}) \bar{y},\\
\intertext{ where } \set{\bar{x}, x} &= \left(\bar{x}\,'
\bar{x}\,^{(3)} - (3/2) \bar{x}\,''^{\,2} \right)\,
\bar{x}\,'^{\,-2}
\end{align}
\end{subequations}
is the Schwarzian derivative, and where $\bar{x}\,' = d \bar{x}/
dx.$  Thus by an application of ~\eqref{eq:chg2sch} to
~\eqref{eq:stdlin}, we obtain after the renaming of variables and
coefficients an equation of the form
\begin{equation}\label{eq:sch}
y^{(n)} + a_{n-3} y^{(n-3)} + a_{n-4} y^{(n-4)} + \dots + a_0 y=0.
\end{equation}
In fact, ~\eqref{eq:sch} is the canonical form  that Forsyth
 ~\cite{for-inv}, Brioschi ~\cite{brioch}, and some of their
contemporaries adopted for the investigation of invariant functions
of linear ODEs. However, Forsyth who studied these equations for a
general order did not derive any explicit expressions for the
invariants, with the exception of a couple of semi-invariants. There
is a number of important facts that occur in the determination of
the invariants when the equation is put into the reduced form
~\eqref{eq:sch}, but we postpone the discussion on the properties of
invariants to the next Section, where a formal result regarding
their exact number is also given.\par

For $n=3,$ nontrivial invariants exist only as from the second
prolongation of $X^0= X_3^\mathsf{w},$ and we have computed them for
the second and the third prolongation. For $n=4,$ there are no
zeroth order invariants and we have computed these invariants for
the first and the second prolongation of $X_4^\mathsf{w}.$ For
$n=5,$ the invariants are given for all orders of prolongation of
$X_5^\mathsf{w},$ from $0$ to $2.$ Since invariants corresponding to
a given generator are also invariants for any prolongation of this
generator, its is only necessary to list the invariants for the
highest order of prolongation of any given generator. In the
canonical form ~\eqref{eq:sch}, and regardless of the order of the
equation, the generator $X^0$ of $G_c$ will depend on three
arbitrary constants that we shall denote by $k_1, k_2$ and
$k_3.$\par
For $n=3,$ we have
\begin{align*}
X_3^\mathsf{w} &= \left[ k_1+ x (k_2 + k_3 x)\right] \pd_x - 3 a_0
(k_2+ 2
k_3 x)\pd_{a_0}\\
\Psi_3^{3,1} &= \Psi_3^{2,1}= (-\frac{7 a_0'\,^2}{6 a_0} + a_0'')^3
/a_0^5\\
\Psi_3^{3,2} &= \frac{(28 a_0'^3 - 36 a_0\, a_0'\,  a_0'' + 9
a_0^2\, a_0^{(3)})}{9 a_0^4}
\end{align*}
For $n=4,$ we have
\begin{align*}
X_4^\mathsf{w} &= \left[ k_1+ x (k_2 + k_3 x)\right] \pd_x - 3 a_1 (k_2 + 2 k_3 x)\pd_{a_1} + \left[-3 a_1 k_3 - 4 a_0 (k_2 + 2 k_3 x)\right]\pd_{a_0}   \\
\Psi_4^{2,1} &= \Psi_4^{1,1}= \frac{(- 2 a_0 + a_1')^3}{a_1^4}  \\
\Psi_4^{2,2} &= \Psi_4^{1,2} = \frac{(a_0' - \frac{a_0 (a_0 + 3 a_1')}{3 a_1})^3}{a_1^5} \\
\Psi_4^{2,3} &= \frac{(14 a_0^2 - 14 a_0\, a_1' + 3 a_1\, a_1'')^3}{27 a_1^8}\\
\Psi_4^{2,4} &= \frac{16 a_0^3 + 48 a_0^2\, a_1'+ 9 a_1^2\, a_0'' -
9 a_0\, a_1(6 a_0' + a_1'')}{9 a_1^4}
\end{align*}
Finally, for $n=5$ we have
\begin{subequations}
\begin{align}
\begin{split}X_5^\mathsf{w} &= \left[ k_1+ x (k_2 + k_3 x)\right] \pd_x - 3 a_2 (k_2 + 2 k_3 x)\pd_{a_2}\notag \\
 & \quad -2 \left[-3 a_2 k_3 +2 a_1 (k_2 + 2 k_3 x)\right]\pd_{a_1} + \left[ -4 a_1 k_3 - 5 a_0 (k_2+ 2 k_3 x)  \right] \pd_{a_0}\end{split}   \notag \\
\Psi_5^{2,1} &= \Psi_5^{0,1}= \Psi_5^{1,1}=  \frac{\left(3a_0\, a_2 - a_1^2  \right)^3}{27 a_2^8} \label{eq:schp0}\\
\Psi_5^{2,2} &= \Psi_5^{1,2} = \frac{(- a_1 + a_2')^3}{a_2^4}  \notag \\
\Psi_5^{2,3} &= \Psi_5^{1,3} =  \frac{\left( 6 a_2 a_1' - a_1^2 - 6 a_1\, a_2'\right)^3}{216\, a_2^8}\notag \\
\Psi_5^{2,4} &= \Psi_5^{1,4} =\frac{5 a_1^3 + 9 a_2^2\, a_0' - 3
a_1\, a_2(5 a_0  + 2 a_1')+ 3 a_1^2 \, a_2'}{9 a_2^4}
\end{align}
\end{subequations}
and
\begin{align*}
\Psi_5^{2,5} &= \frac{(7 a_1^2 - 14 a_1\, a_2' + 6 a_2\, a_2'')^3}{216\, a_2^8}  \\
\Psi_5^{2,6} &= \frac{4 a_1^3 + 24 a_1^2 \, a_2'+ 9 a_2^2 a_1'' - 9 a_1 a_2(3 a_1'+ a_2'')}{9 a_2^4}\\
\Psi_5^{2,7} &= \frac{\left[18(- a_1^4 - a_1^3\, a_2'+ a_2^3\,
a_0'')- 6 a_1 a_2^2 (11 a_0' + 2 a_1'') + a_1^2 a_2(55 a_0+ 40 a_1'
+ 6 a_2'') \right]^3}{5832\, a_2^{16}}
\end{align*}

\subsection{Equations in the standard form $~\eqref{eq:stdlin}$}
No attempt has ever been made to our knowledge, to obtain the
invariant functions for equations in the canonical form
~\eqref{eq:stdlin}, due simply to difficulties associated with such
a determination and to the prominence in size that semi-invariants
for this canonical form already display. Indeed, semi-invariants for
the canonical form ~\eqref{eq:stdlin} were calculated by Laguerre
 ~\cite{lag} for equations of the third order. Transformations of the
form ~\eqref{eq:chg2nor} or ~\eqref{eq:chg2sch} are usually applied
to transform ~\eqref{eq:stdlin} to an equation in which one or both
of the coefficients $a_{n-1}$ and $a_{n-2}$ vanish. Even with the
infinitesimal generator $X^0$ of ~\eqref{eq:generic3} at our
disposal, it is still difficult to find directly the corresponding
invariants for the third order equation, which is the lowest for
which the linear equation may have nontrivial invariants. However we
show here by an example how these invariants can be found for the
third order from those of equations in a much simpler canonical
form.\par

Under the change of variables ~\eqref{eq:chg2nor}, Eq.
~\eqref{eq:d3gn} takes, after elimination of a constant factor, the
form
\begin{align}\label{eq:red2nord3}
&\bar{y}^{(3)} + B_1 \bar{y}\,' + B_0 \bar{y}=0,  \\[-4mm]
\intertext{where \vspace{-5mm}}
&B_0 = (27 a_0 - 9 a_1\, a_2 + 2 a_2^3 - 9 a_2'')/27\\
&B_1 = (27 a_1 - 9 a_2^2 - 27 a_2')/27.
\end{align}
The sole invariant $\Psi_3^{3,1}$ for equations of the form
~\eqref{eq:red2nord3} and corresponding to the third prolongation of
the associated generator $X^0$ is given by ~\eqref{eq:casnord3}. If
in that function we replace $a_0$ and $a_1$ by $B_0$ and $B_1$
respectively, and then express $B_0$ and $B_1$ in terms of the
original coefficients $a_0, a_1$ and $a_2,$ the resulting function
is an invariant of ~\eqref{eq:d3gn}. However, it does not correspond
to the third, but to the fourth prolongation of the corresponding
generator $X^0= X_3^\mathsf{s}$  that was obtained in
~\eqref{eq:generic3}. In other words, in the canonical form
~\eqref{eq:stdlin} with $n=3,$ we have
\begin{align}
X_3^\mathsf{s} &= X^0, \;\text{as given by ~\eqref{eq:generic3}}  \notag \\
\Psi_3^{4,1} &=   \frac{ (9\, B_1 \mu^2+ 7 \mu'^2- 6 \mu
\mu'')^3}{1728\, \mu^8 }, \label{eq:ivd3gn}
\end{align}
where we have $\mu = -2 B_0 + B_1'.$  Using Theorem 2 of
 ~\cite{ndogftc}, it can be shown that there are no invariants
corresponding to prolongations of $X_3^\mathsf{s}$ of order lower
than the fourth, and that the fourth prolongation has precisely one
invariant.
\section{A note about the infinitesimal generators and the invariants}
\label{s:note} The method of ~\cite{ndogftc} that we have used in
the previous section provides the infinitesimal generator of the
equivalence group $G$ for a given equation of a specific order. This
infinitesimal generator must be integrated in order to obtain the
corresponding structure invariance group for the given family of
equations of a specific order. However, once the structure
invariance group for such a given equation has been found, it is
generally not hard to extend the result to a general order. We show
how this can be done for equations of the form ~\eqref{eq:sch}.\par

   With the notations already introduced in Section \ref{s:2methods},
if we denote by $X$ the full symmetry generator of ~\eqref{eq:sch}
for a specific order and then obtain the infinitesimal generator
$V^0$ of the equivalence group $G,$ we find that
\begin{subequations}
\begin{alignat}{2}\label{eq:ifshd3-5}
V^0= V_{0,3} &=  \left[ k_2 + x (k_3+ k_4 x) \right] \pd_x +   y (k_1+ 2 k_4 x) \pd_y,&& \quad \text{ for $n=3,$}\\
V^0= V_{0,4} &=  \left[ k_2 + x (k_3+ k_4 x) \right] \pd_x +    \frac{y}{3} (2 k_1+ 2 k_3 + 9 k_4 x)\pd_y,&& \quad \text{ for $n=4,$}\\
V^0= V_{0,5} &=  \left[ k_2 + x (k_3+ k_4 x) \right] \pd_x + y (k_1
+ k_3 + 4 k_4 x)\pd_y,&& \quad \text{ for $n=5,$}
\end{alignat}
\end{subequations}
where the $k_j, \text{for $j=1, \dots,4$}$  are arbitrary constants.
Upon integration of these three vector fields, we find that for
$n=3,4,5,$ the structure invariance group can be written in the
general form
\begin{equation}\label{eq:sigp3-5}
x= \frac{\bar{x} - c(1+ a \bar{x})}{b(1+ a\bar{x})}, \qquad y =
\frac{\bar{y}}{d(1+ a \bar{x})^{(n-1)}},
\end{equation}
where $a,b, c$ and $d$ are arbitrary constants. To see why
~\eqref{eq:sigp3-5} holds for any order $n$ of Eq. ~\eqref{eq:sch},
we first notice that these changes of variables are of the much
condensed  form
$$
x=g (\bar{x}), \qquad y = T(\bar{x}) \bar{y}.
$$
It thus follows from the properties of derivations that when
~\eqref{eq:sigp3-5} is applied to ~\eqref{eq:sch}, every term of
order $m$ in $y$ will involve a term of order at most $m$ in
$\bar{y}$ upon transformation. Since the original equation
~\eqref{eq:sch} is deprived of terms of orders $n-1$ and $n-2,$ to
ensure that this same property will also holds in the transformed
equation, we only need to verify that the transformation of the term
of highest order, viz.  $y^{(n)},$ does not involve terms in
$\bar{y}^{(n-1)}$ or $\bar{y}^{(n-2)}.$ However, it is easy to see
that under ~\eqref{eq:sigp3-5}, $y^{(n)}$ is transformed into $b^n
(1+ a \bar{x})^{n+1} \bar{y}^{(n)}/d.$  We have thus obtained the
following result.
\begin{thm}\label{th:sigpsch}
The structure invariance group of linear ODEs in the canonical form
~\eqref{eq:sch}, that is, in the form
$$ y^{(n)} + a_{n-3} y^{(n-3)} + a_{n-4} y^{(n-4)} + \dots + a_0 y=0,$$
is given for all values of $n>2$ by ~\eqref{eq:sigp3-5}.
\end{thm}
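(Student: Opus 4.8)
The plan is to verify directly that the transformation in~\eqref{eq:sigp3-5} preserves the canonical form~\eqref{eq:sch} for every $n>2$, and then to argue that it is the full structure invariance group by dimension count against the explicitly computed low-order cases. First I would recognize that~\eqref{eq:sigp3-5} is a special instance of the general admissible change of variables $x=g(\bar{x}),\ y=T(\bar{x})\bar{y}$ that already defines the structure invariance group of~\eqref{eq:stdlin}; indeed here $g(\bar{x})=\bigl(\bar{x}-c(1+a\bar{x})\bigr)/\bigl(b(1+a\bar{x})\bigr)$ is a M\"obius (fractional-linear) map and $T(\bar{x})=d^{-1}(1+a\bar{x})^{-(n-1)}$. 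Because any such transformation sends an $n$th order linear homogeneous ODE to another one, the only thing to check is that the two coefficients $a_{n-1}$ and $a_{n-2}$, which vanish in~\eqref{eq:sch}, remain zero after transformation.

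The key computational step, already sketched in the excerpt, is the Leibniz/chain-rule expansion of $y^{(n)}$ under $x=g(\bar{x})$, $y=T(\bar{x})\bar{y}$. I would organize this as follows. By the excerpt's remark, each derivative $y^{(m)}$ expands into a combination of $\bar{y}^{(0)},\dots,\bar{y}^{(m)}$, so the transformed equation is automatically of order $n$ with no derivatives above the $n$th. Since the original equation~\eqref{eq:sch} already lacks the $(n-1)$st and $(n-2)$nd order terms, the only new contributions to $\bar{y}^{(n-1)}$ and $\bar{y}^{(n-2)}$ can come from the top term $y^{(n)}$. The crucial claim is the one stated in the text: under~\eqref{eq:sigp3-5} one has
\begin{equation}
y^{(n)} = \frac{b^n (1+a\bar{x})^{n+1}}{d}\,\bar{y}^{(n)},
\end{equation}
with \emph{no} lower-order $\bar{y}^{(n-1)}$ or $\bar{y}^{(n-2)}$ terms appearing. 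I would establish this by computing $\bar{x}'=dg^{-1}/dx$ and the weight factor $T$, noting that the specific exponent $n-1$ in $T$ and the projective form of $g$ are tuned precisely so that the two would-be lower-order terms cancel. Concretely, one checks that $d\bar{x}/dx$ is proportional to $(1+a\bar{x})^{2}$ and that the product $T\cdot(d\bar{x}/dx)^{n}$, together with the derivatives of $T$, combines so that the $\bar{y}^{(n-1)}$ and $\bar{y}^{(n-2)}$ coefficients vanish identically.

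The main obstacle I anticipate is verifying the cancellation of the $\bar{y}^{(n-2)}$ coefficient for general $n$, rather than case by case: the $\bar{y}^{(n-1)}$ term is controlled by the single condition relating $T'/T$ to the Schwarzian-type normalization (the analogue of~\eqref{eq:chg2nor}), but the $\bar{y}^{(n-2)}$ term involves both $T''/T$ and the Schwarzian $\{\bar{x},x\}$, and showing it vanishes for \emph{all} $n$ requires exploiting that $g$ is a M\"obius map (hence has vanishing Schwarzian) and that the exponent is exactly $n-1$. I would handle this by writing the general formula for the coefficient of $\bar{y}^{(n-2)}$ in terms of $T$, $T'$, $T''$ and $\{\bar{x},x\}$, then substituting $\{\bar{x},x\}=0$ (valid since $g$ is fractional-linear) and the explicit $T=d^{-1}(1+a\bar{x})^{-(n-1)}$, and confirming the remaining expression is identically zero. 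Finally, to see that~\eqref{eq:sigp3-5} is the \emph{full} group and not merely a subgroup, I would match its four essential parameters $a,b,c,d$ against the four arbitrary constants $k_1,\dots,k_4$ appearing in the infinitesimal generators $V_{0,3},V_{0,4},V_{0,5}$ of~\eqref{eq:ifshd3-5}; since those generators were obtained as the complete equivalence algebra for $n=3,4,5$ and the integrated group~\eqref{eq:sigp3-5} recovers exactly a four-parameter family whose infinitesimal form reproduces them, no further symmetries are available, and the structure of the argument for $y^{(n)}$ shows the same four-parameter family is admissible in every order $n>2$.
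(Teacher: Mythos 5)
Your proposal is correct and follows essentially the same route as the paper: the paper likewise reduces the general-$n$ claim to the observation that, since every $y^{(m)}$ transforms into derivatives of $\bar{y}$ of order at most $m$, only the top term $y^{(n)}$ could produce $\bar{y}^{(n-1)}$ or $\bar{y}^{(n-2)}$ contributions, then invokes the transformation law $y^{(n)} \mapsto b^n(1+a\bar{x})^{n+1}\bar{y}^{(n)}/d$, while fullness of the group rests on the infinitesimal equivalence generators computed and integrated for $n=3,4,5$. The only difference is one of detail: where the paper asserts the key transformation law as ``easy to see,'' you supply an actual verification (via $d\bar{x}/dx \propto (1+a\bar{x})^2$, the vanishing Schwarzian of the M\"obius map, and the tuned exponent $n-1$ in $T$), which fills in, rather than replaces, the paper's argument.
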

It is easy to prove a similar result using the same method for
equations in the other canonical forms ~\eqref{eq:stdlin} and
~\eqref{eq:nor}, for which the corresponding structure invariance
groups are well-known. It is also clear that ~\eqref{eq:sigp3-5}
does not violate in particular the structure invariance group of
~\eqref{eq:nor} which is given ~\cite{schw} by $x= F (\bar{x})$ and
$y= \alpha F'^{(n-1)/2} \bar{y},$ where $F$ is an arbitrary function
and $\alpha$ an arbitrary coefficient.
\par
All the invariants that we have found for linear ODEs  are rational
functions of the coefficients of the corresponding equation and
their derivatives, and this agrees with earlier results
 ~\cite{for-inv}. Another fact about these invariants is that for
equations in the form ~\eqref{eq:sch}, nontrivial invariants that
depend exclusively on the coefficients of the equation and not on
their derivatives, that is, invariants such as that in
~\eqref{eq:schp0} which are zeroth order differential invariants of
$X^0$ occurs as from the order $5$ onwards. Indeed, since $X^0$
depends only on three arbitrary constants in the case of the
canonical form ~\eqref{eq:sch}, there will be more invariants, as
compared to the other canonical forms in which $X^0$ depends on
arbitrary functions. More precisely, we have the following result
\begin{thm}
For equations  in the canonical form ~\eqref{eq:sch}, the exact
number $\mathbf{\small n}$ of fundamental invariants for a given
prolongation of order $p \geq 0$ of the infinitesimal generator
$X^0$ is given by
\begin{equation}\label{eq:nbinvsch}
\mathbf{\small n}=\begin{cases} n + p\,(n-2) -4, & \text{if $(n,p)
\neq (3,0)$} \\
0, & \text{otherwise}.
\end{cases}
\end{equation}
\end{thm}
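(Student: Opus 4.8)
The plan is to apply the general counting principle for invariants of a Lie algebra of vector fields: if the prolonged operator $X^0$ generates a distribution of generic rank $r$ on a space of dimension $N$, then the number of functionally independent (fundamental) invariants equals $N-r$ (this is the content of Theorem 2 of \cite{ndogftc}). The proof therefore splits into two largely independent computations, namely determining $N$ and determining $r$. First I would count $N$. In the canonical form \eqref{eq:sch} the free coefficients are $a_0, a_1, \dots, a_{n-3}$, that is, $n-2$ functions; passing to the $p$-th prolongation adjoins their derivatives up to order $p$, contributing $(n-2)(p+1)$ coordinates, to which one adds the single independent variable $x$. Hence $N = 1 + (n-2)(p+1)$, and the target formula is exactly $N-3 = (n-2)(p+1)-2 = n + p(n-2) - 4$. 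It thus remains to show that the generic rank is $r=3$ precisely when $(n,p)\neq(3,0)$, and that it drops to $r=2=N$ in the exceptional case.

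The heart of the argument is the rank computation. Since $X^0$ depends on the three constants $k_1,k_2,k_3$ (as recorded in the examples $X_3^\mathsf{w}, X_4^\mathsf{w}, X_5^\mathsf{w}$), I would write $X^0 = k_1 W_1 + k_2 W_2 + k_3 W_3$, where $W_1 = \pd_x$, where $W_2 = x\pd_x - \sum_{j}(n-j)a_j\pd_{a_j}$ is the weight (dilation) operator, and where $W_3 = x^2\pd_x + \cdots$ is the conformal generator; together they realize the projective algebra $\mathrm{sl}(2,\R)$ on the $x$-line, so the distribution is involutive and the counting principle applies. The decisive simplification is to test independence at the point $x=0$: there $W_1$ is the only field with a $\pd_x$-component, while the $x\pd_x$ and $x^2\pd_x$ parts of $W_2,W_3$ vanish, so that the rank at such a point equals $1 + \dim\,\mathrm{span}\{W_2,W_3\}$ computed in the coefficient directions alone. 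A direct prolongation shows $W_2$ acts diagonally, with component $-[(n-j)+k]\,a_j^{(k)}$ on $\pd_{a_j^{(k)}}$, whereas $W_3$ at $x=0$ reduces to a strictly off-diagonal shift: for $n\geq 4$ it carries a coefficient $a_{j+1}$ into the $\pd_{a_j}$ direction, and for $n=3,\,p\geq 1$ its first prolongation carries $a_0$ into the $\pd_{a_0'}$ direction.

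In every case possessing at least two coefficient coordinates, a suitable $2\times 2$ minor of the components of $W_2$ and $W_3$ is then a product of two distinct coordinates, hence a nonvanishing rational function; this establishes $r=3$. The coefficient space has dimension $(n-2)(p+1)$, which is at least $2$ for every $n>2$ and $p\geq 0$ except $(n,p)=(3,0)$, where it equals $1$; there $N=2$, the fields $W_1,W_2$ are already independent (their $x$- and $a_0$-components give the nonzero determinant $-3a_0$), so no rank $3$ is attainable and $r=2=N$. Combining the two cases with $N-r$ yields $n+p(n-2)-4$ when $(n,p)\neq(3,0)$ and $0$ otherwise, which is \eqref{eq:nbinvsch}.

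The main obstacle is the uniform verification, for all admissible $(n,p)$, that $W_2$ and $W_3$ remain linearly independent in the coefficient directions. The evaluation at $x=0$ reduces this to the semisimple-versus-shift dichotomy between the diagonal $W_2$ and the nilpotent part of $W_3$, but one still has to exhibit the required nonvanishing minor from the general prolongation formulas, and to treat the borderline order $n=3$ separately, since there the independence is supplied only by the prolonged, rather than the zeroth-order, part of $W_3$.
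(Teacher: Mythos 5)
Your proposal is correct and follows essentially the same route as the paper's proof: both invoke Theorem 2 of the cited work to count invariants as the number of coordinates $M=n-1+p(n-2)$ of the prolonged space minus the generic rank of the matrix $\mathcal{M}$ formed by $W_1,W_2,W_3$, both obtain $M-3$ for $(n,p)\neq(3,0)$, and both dispose of the case $(n,p)=(3,0)$ separately. The only difference is one of rigor rather than of method: where the paper simply asserts that $\mathcal{M}$ "clearly" has maximal rank $3$ and that a "direct calculation" settles $(n,p)=(3,0)$, you actually establish the rank claim by evaluating the $\mathrm{sl}(2,\R)$ fields at $x=0$ and exhibiting nonvanishing minors from the diagonal action of $W_2$ against the shift part of $W_3$, and you settle the exceptional case by the dimension bound $r\leq N=2$ — a worthwhile filling-in of the step the paper leaves unargued.
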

\begin{proof}
We first note that the lowest order for equations of the form
 ~\eqref{eq:sch} is $3,$ and we have seen that when $n=3,4,5$ the
generator $X^0$ depends on three arbitrary constants alone, that we
have denoted by $k_1, k_2$ and $k_3,$ and do not depend on any
arbitrary functions. Consequently, this is also true for any
prolongation of order $p$ of $X^0,$  and it is also not hard to see
that this property does not depend on the order of the equation. It
is clear that the corresponding matrix $\mathcal{M}$ whose $j$th row
is represented by the components of the vector field $W_j$ of
~\eqref{eq:linearcomb} will always have three rows and maximum rank
$\mathsf{r} =3$ when $(n,p) \neq (3,0).$ On the other hand, $X^0$ is
expressed in a coordinate system of the form $\set{x, a_{n-3},
a_{n-4}, \dots, a_0}$ that has $n-1$ variables. Its $p$th
prolongation is therefore expressed in terms of $M=n-1 + p(n-2)$
variables. For $(n,p)\neq (3,0),$ the exact number of fundamental
invariants is $M-\mathsf{r},$ which is $n + p(n-2) -4,$ by Theorem 2
of
 ~\cite{ndogftc}. For $(n,p)=(3,0)$ a direct calculation shows that
the number of invariants is zero. This completes the proof of the
Theorem.
\end{proof}
Similar results can be obtained for equations in other canonical
forms, but as it is customary to choose the simplest canonical form
for the study of invariants, we shall not attempt to prove them
here. \par
It should be noted at this point that invariant functions have
always been intimately associated with important properties of
differential equations, and in particular with their integrability.
Laguerre \cite{lag} showed that for the third order equation
\eqref{eq:d3gn}, when the semi-invariant $\mu= - 2 B_0+ B_1'$ given
in Eq. \eqref{eq:ivd3gn} vanishes, there is a quadratic homogeneous
relationship between any three integrals of this equation, and
consequently its integrability is reduced to that of a second order
equation. Subsequently, Brioschi \cite{brioch} established a similar
result for equations in the canonical form \eqref{eq:nor}. More
precisely, he showed that for third order equations, when the
semi-invariant $\mu= -2 a_0+ a_1'$ of Eq. \eqref{eq:casnord3}
vanishes, the equation can be reduced in order by one to the second
order equation
$$ \bar{y}'' + \frac{a_1}{4} \bar{y}=0 ,$$
where $y= \bar{y}^2$ and he also gave the  counterpart of this
result for fourth order equations when the corresponding
semi-invariant $\mu=a_1-a_2'$ of Eq. \eqref{eq:norfn4} vanishes. In
fact all reductions of differential equations to integrable forms
implemented in \cite{halphen} are essentially based on  invariants
of differential equations. More recently, Schwarz \cite{schw}
attempted to obtain a classification of third order linear ODEs
based solely on values of invariants of these equations. He also
proposed \cite{schw2} a solution algorithm for large families of
Abel's equation, based on what is called in that paper a functional
decomposition of the absolute invariant of the equation.

\section{Concluding Remarks}\label{s:concl}
We have clarified in this paper the algorithm of the novel method
that has just been proposed in ~\cite{ndogftc} for the equivalence
group of a differential equation and its generators, and we have
shown how its application to the determination of invariants differs
from the former and well-known method of
 ~\cite{ibra-not, ibra-nl},
by treating the case of the third order linear ODEs with the two
methods. We have subsequently obtained some explicit expressions of
the invariants for linear ODEs of orders three to five, and
discussed various properties of these invariants and of the
infinitesimal generator $X^0.$\par
 Another fact that has emerged concerning the infinitesimal generator $X^0$
of the group $G_c$ for linear ODEs  is that, whenever the equation
is transformed into a form in which the number of coefficients in
the equation is reduced by one, the number of arbitrary functions in
the generator $X^0$ corresponding to the transformed equation is
also reduced by one, regardless of the order of the equation. Thus
while $X_a^\mathsf{s}$ depends on two arbitrary functions,
$X_a^\mathsf{n}$ depends only on one such function and $X_a^w$
depends on no such function, but on three arbitrary constants. This
is clearly in agreement with the expectation that equations with
less coefficients are easier to solve, since less arbitrary
functions in $X^0$ means more invariants and hence more possibility
of reducing the equation to a simpler one. However, the full meaning
of the degeneration of these functions with the reduction of the
number of  coefficients of the equation is still to be clarified. We
also believe that on the base of recent progress on generating
systems of invariants and invariant differential operators (see
 ~\cite{olvmf, olvgen} and the references therein), it should be
possible to treat the problem of determination on invariants of
differential equations in a more unified way, regardless of the
order of the equation, or the order of prolongation of the operator
$X^0.$

%

\begin{thebibliography}{99}
%
\bibitem{brioch} F. Brioschi, Sur les \'equations
diff\'erentielles lin\'eaires,  Bulletin de la Soci\'ete Math\'emat.
de France  7 (1879) 105-108.

\bibitem{for-inv} A.R.Forsyth, Invariants, covariants,  and
quotient-derivatives associated with linear differential equations,
Philosophical Transactions of the Royal Society of London 179 (1888)
377-489.

\bibitem{halphen} G.H. Halphen, M\'emoire sur la r\'eduction des
\'equations diff\'erentielles lin\'eaires aux formes int\'egrables,
M\'emoires des Savants \'Etrangers  28 (1882).

\bibitem{ibra-not} N.H. Ibragimov, Infinitesimal method in the theory of invariants
of algebraic and differential equations,  Notices of the South
African Mathematical Society  29 (1997) 61-70.

\bibitem{ibra-nl} N.H. Ibragimov, Invariants of a remarkable
family of nonlinear equations,  Nonlinear Dyn.  30 (2002) 155-166.


\bibitem{ibra-par} N.H. Ibragimov, S.V. Meleshko,  E. Thailert, Invariants of linear
parabolic differential equations,  Commun.  Nonlinear Sci. Numer.
Simul.  13 (2008) 277-284.

\bibitem{faz} I.K. Johnpilai, F.M. Mahomed, Singular invariant
equation for the $(1+1)$ Fokker-Planck equation,  J. Phys. A: Math.
Gen.  34 (2001) 11033-11051

\bibitem{waf} I.K. Johnpillai, F.M. Mahomed, C. Wafo Soh,
Basis of joint invariants for $(1+1)$ Linear hyperbolic equations,
Journal of Nonlinear Mathematical Physics   9  (2002) 49-59.

\bibitem{lag} E.H. Laguerre, Sur les equations diff\'erentielles
lin\'eaires du troisi\`eme ordre,  Comptes Rendus  de l'Acad\'emie
des Sciences de Paris 88 (1879) 116-119.

\bibitem{laplace} P.S. Laplace, Recherches sur le calcul
int\'egral aux diff\'erences partielles,  M\'emoires de l'Acad\'emie
Royales des Sciences de Paris  (1773/77)  341-402 . Reprinted in
Oeuvres compl\`etes de Laplace,  9 Gauthier-Villars, Paris, 1893,
5-68, English transl., New York, 1966.

\bibitem{lie1} S. Lie, Zur allegemeinnen theorie der partiellen
differentialgleichungen beliebigerordnung,  Leipziger Berichte 1
(1895) 53-128. Reprinted in Lie's  Gesammelte Abhandlundgen, Vol. 4.
1929 Paper IX.

\bibitem{liegp} S. Lie,  Theorie der transformationsgruppen I,
II, III, Teubner, Leipzig, 1888. Reprinted by Chelsea Publishing
Compagny, New York, 1970.

\bibitem{melesh} S.V. Melshko, Generalization of the equivalence
transformations,  Nonlinear Mathematical Physics  3 (1996) 170-174.

\bibitem{ndogftc} J.C. Ndogmo, A method for the equivalence group
and its infinitesimal generators,  J. Phys. A: Math. Theor. 41
(2008) 102001.

\bibitem{ndog08a} J.C. Ndogmo,  Invariants of differential equations defined by vector
fields, J. Phys. A: Math. Theor.  41 (2008) 025207.

\bibitem{olvmf} P.J. Olver, Moving frames, J. Symbolic
Comput. 36 (2003) 510-512.

\bibitem{olvgen} P.J. Olver, Generating differential invariants
J. Math. Anal. Appl.  333 (2007) 450-471.

\bibitem{ovsy1} L.V. Ovsyannikov, Group analysis of Differential
Equations, Nauka, Moscow, 1978.

\bibitem{schw} F. Schwarz, Equivalence classes, symmetries and
solutions of linear third-order differential equations, Computing 69
(2002) 141 - 162.

\bibitem{schw2} F. Schwarz, Algorithmic solution of Abel's equation,
Computing 61 (1998) 39-46.
\end{thebibliography}
\end{document}